\documentclass[12pt]{amsart}
\usepackage{amssymb,bm,graphicx,graphics,mathrsfs,bbm,color,float}
\numberwithin{equation}{section}
\numberwithin{figure}{section}
\allowdisplaybreaks[4]
\theoremstyle{plain}
\newtheorem{theorem}{Theorem}[section]
\newtheorem{lemma}[theorem]{Lemma}
\theoremstyle{remark}
\newtheorem{remark}[theorem]{Remark}
\begin{document}
\def\O{\Omega}
\def\p{\partial}
\def\LT{{L^2(\O)}}
\def\R{\mathbb{R}}
\def\argmin{\mathop{\rm argmin}}
\def\cT{\mathcal{T}}
\def\cN{\mathcal{N}}
\def\tu{\tilde{u}}
\def\ty{\tilde{y}}
\def\d{\displaystyle}
\def\G{\Gamma}
\def\cJ{\mathcal{J}}

  \title[New Error Analysis for a Point Tracking Problem]
  {A New Error Analysis for a Point Tracking Elliptic Distributed Optimal
  Control Problem with Pointwise Control Constraints}
\thanks{This  work  was supported in part by
the National Science Foundation under
 Grant No. DMS-25-13273.}
\author{Susanne C. Brenner}
\address{Susanne C. Brenner, Department of Mathematics and Center for
Computation and Technology, Louisiana State University, Baton Rouge,
LA 70803, USA}
\email{brenner@math.lsu.edu}
\author{Li-yeng Sung}
\address{Li-yeng Sung,
 Department of Mathematics and Center for Computation and Technology,
 Louisiana State University, Baton Rouge, LA 70803, USA}
\email{sung@math.lsu.edu}
\begin{abstract}
  We develop a concise error analysis for a linear-quadratic elliptic distributed
   optimal control problem with
  point tracking  and pointwise control constraints, where  a standard finite element
  discretization and a variational discretization are treated in a unified framework.
  The analysis is self-contained modulo standard techniques in finite element
   analysis and classical
  results for elliptic partial differential equations.
\end{abstract}
\keywords{elliptic distributed optimal control problem, point
 tracking, pointwise control constraints}
\subjclass{49J40, 49M41, 65K15, 65N30}
%
%
\maketitle
\section{Introduction} \label{sec:Introduction}
Let $\O\subset\R^d$  $(d=2,3)$ be a bounded  convex polygonal/polyhedral domain.
Let $\beta\in(0,1]$ and $a<b$ be constants, $x_1,\ldots, x_J$ be points in $\O$, and
 $d_1,\ldots,d_J\in\R$.   The optimal control problem is to find
\begin{equation}\label{eq:OCP}
  (\bar y,\bar u)=\argmin_{(y,u)\in K}\frac12\Big(\sum_{j=1}^J [y(x_j)-d_j]^2+\beta
  \|u\|_\LT^2\Big),
\end{equation}
 where $(y,u)\in H^1_0(\O)\times \LT$ belongs to the constraint set $K$ if and only if
\begin{alignat}{3}
  \int_\O \nabla y\cdot\nabla z\,dx&=\int_\O uz\,dx&\qquad&\forall\,z\in H^1_0(\O),
  \label{eq:StateEquation}\\
  a&\leq u\leq b&\qquad&\text{almost everywhere in $\O$}.\label{eq:ControlConstraint}
\end{alignat}
\begin{remark}\label{rem:ER}
  According to elliptic regularity for convex domains
  (cf. \cite{Grisvard:1985:EPN,Dauge:1988:EBV,MR:2010:Polyhedral}), the constraint
  \eqref{eq:StateEquation} implies $y\in H^2(\O)$ and
\begin{equation}\label{eq:ERBdd}
  \|y\|_{H^2(\O)}\leq C_\O \|u\|_\LT.
\end{equation}
  Therefore the pointwise evaluation of $y$ is justified by the Sobolev embedding
   (cf. \cite{ADAMS:2003:Sobolev})
\begin{equation}\label{eq:Sobolev}
 H^2(\O)\hookrightarrow  C(\bar \O).
\end{equation}
\end{remark}
\begin{remark}\label{rem:Notation}
  We follow the standard notation for function spaces, norms and differential
  operators that can be
  found for example in \cite{Ciarlet:1978:FEM,ADAMS:2003:Sobolev,BScott:2008:FEM}.
\end{remark}
\par
 Numerical methods for this optimal control problem were investigated in the papers
 \cite{CGY:2015:Point,BDE:2016:Points,AOS:2018:Weighted,BMV:2019:Point,CL:2026:PT,LC:2026:IPHDG}
 and also in \cite[Chapter~11]{VM:2025:EOCP}.
 Our goal is to derive in a unified framework the best
 {\em a priori} error estimates in the literature that were obtained for the standard finite element
 discretization
 and  the variational discretization.  Our approach is based on
 an extension of the new error
 analysis in \cite{BSung:2025:NewAnalysis} for elliptic distributed optimal control
 problems with $L^2$ tracking and pointwise
 control constraints to  the point tracking problem \eqref{eq:OCP}--\eqref{eq:ControlConstraint}.
\par
 The rest of the paper is organized as follows.  We recall the properties of the
 continuous problem in Section~\ref{sec:Continuous} and introduce the discrete problems
  in Section~\ref{sec:Discrete}.
 We set up the convergence analysis in Section~\ref{sec:SetUp}, where two key
 terms in the error analysis
 are identified.  The error estimates are then presented in Section~\ref{sec:Errors}.
 We end with some concluding remarks in Section~\ref{sec:Conclusion}.
 Appendix~\ref{append:Interior} and Appendix~\ref{append:DG}
 provide the derivations of two technical results used in Section~\ref{sec:Errors}.
\par
 Throughout the paper we use $C$ (with or without subscripts) to denote a
 generic positive constant
 independent of the mesh size and $\beta$.
\section{Continuous Problem}\label{sec:Continuous}
 It follows from the projection theorem for  Hilbert spaces
 (cf. \cite[Theorem~5.2]{Brezis:2011:Book})
 that
  the convex minimization problem \eqref{eq:OCP}--\eqref{eq:ControlConstraint}
 has a unique solution $(\bar y,\bar u)\in K$  characterized by the
 first order optimality condition
\begin{equation}\label{eq:VI}
  \sum_{j=1}^J [\bar y(x_j)-d_j][y(x_j)-\bar y(x_j)]+\beta\int_\O
  \bar u(u-\bar u)dx\geq 0\qquad\forall\,
  (y,u)\in K.
\end{equation}
%
\par
 It is easy to check (cf. \cite[Section~2.8]{Troltzsch:2010:OC}) that
 the variational inequality \eqref{eq:VI} is equivalent to the following relations:
\begin{alignat}{3}
\int_\O \nabla\bar y\cdot\nabla z\,dx&=\int_\O \bar u z\,dx
&\qquad&\forall\,z\in H^1_0(\O),
\label{eq:State}\\
\int_\O \bar p \Delta q\,dx&=-\sum_{j=1}^J (\bar y(x_j)-d_j)q(x_j)\label{eq:Adjoint}\\
&=-\sum_{j\in\cJ}(\bar y(x_j)-d_j)q(x_j)
&\qquad&\forall\,q\in H^2(\O)\cap H^1_0(\O),\notag
\\
 \bar u&=\max(a,\min(b,-\bar p/\beta)),\label{eq:Control}
\end{alignat}
 where $\bar p\in \LT$ is the adjoint state, and
\begin{equation}\label{eq:cJ}
  \cJ=\{1\leq j\leq J: \bar y(x_j)-d_j\neq0\}.
\end{equation}
%
\subsection{The Adjoint State}\label{subsec:AdjointState}
 Let $\rho\in C^\infty_c(\O)$ such that $\rho=1$ near the tracking points in
 $\{x_j:\,j\in \cJ\}$, and
\begin{equation}\label{eq:FS}
  \Gamma(x)=\begin{cases}\d
    \frac{\ln |x|}{2\pi}&\qquad\text{$d=2$},\\[4pt]
   \d -\frac{1}{4\pi|x|}&\qquad\text{$d=3$}.
  \end{cases}
\end{equation}
\par
 Then
 we have
\begin{equation}\label{eq:FundamentalSolution}
    \Delta \big(\rho(x)\Gamma(x-x_j)\big)=\delta_j+r_j
\end{equation}
 in the sense of distributions (cf. \cite[Section~2.4]{GT:2001:EllipticPDE}),
 where $\delta_j$ is the Dirac point measure associated with $x_j$ and
 $r_j\in C^\infty_c(\O)$.
 \par
 It follows from \eqref{eq:Adjoint} and \eqref{eq:FundamentalSolution} that
\begin{align}\label{eq:AdjointDecomposition}
  \bar p
  =-\sum_{j\in \cJ} [\bar y(x_j)-d_j]\rho(x)\Gamma(x-x_j)+\tilde p,
\end{align}
 where
\begin{equation}\label{eq:tildep}
 \tilde p\in H^2(\O)\cap H^1_0(\O)
\end{equation}
 satisfies
\begin{equation*}
  \int_\O \nabla \tilde p\cdot\nabla q\,dx=\int_\O rq\,dx\qquad
  \forall\,q\in H^1_0(\O).
\end{equation*}
 Here $r=-\sum_{j\in \cJ} [\bar y(x_j)-d_j]r_j$ belongs to $C^\infty_c(\O)$.
\par\begin{remark}\label{rem:AdjointStateRegularity}
  The relations \eqref{eq:AdjointDecomposition}--\eqref{eq:tildep}
   imply that the adjoint
  state
  $\bar p$ belongs to $H^2$ away from the tracking points in $\{x_j:\,j\in\cJ\}$.
\end{remark}
\subsection{The Optimal Control}\label{subsec:OptimalControl}
 We deduce from \eqref{eq:Sobolev}, \eqref{eq:cJ},
 \eqref{eq:FS},
  \eqref{eq:AdjointDecomposition} and \eqref{eq:tildep} that
\begin{equation*}
  \lim_{x\rightarrow x_j}|\bar p(x)|=\infty \quad\text{if}\quad j\in \cJ.
\end{equation*}
 It then follows from \eqref{eq:Control} that
\begin{equation}\label{eq:OCConstant}
  \text{$\bar u$ is a constant near any tracking points in $\{x_j:\,j\in\cJ\}$.}
\end{equation}
\begin{lemma}\label{lem:OCRegularity}
  We have
\begin{equation}\label{eq:OCRegularity}
   \bar u\in W^{1,s_d}(\O),
\end{equation}
 where $s_d=6$ for $d=3$ and $s_d$ can be any number in $[0,\infty)$ for $d=2$.
\end{lemma}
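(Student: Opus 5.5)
The idea is to split $\O$ into a neighborhood of the singular tracking points, where $\bar u$ is constant by \eqref{eq:OCConstant}, and its complement, where $\bar p$ is regular and the projection formula \eqref{eq:Control} transfers that regularity to $\bar u$.

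Fix $\epsilon>0$ so small that the balls $B_j=B(x_j,\epsilon)$, $j\in\cJ$, are disjoint, lie compactly in $\O$, and satisfy two further conditions. First, $\rho=1$ on each $B_j$. Second, $\bar u$ is constant on each $B_j$; this is possible by \eqref{eq:OCConstant}, since $|\bar p|\to\infty$ at $x_j$ gives $-\bar p/\beta\ge b$ or $\le a$ on a small punctured ball. (The sign of $\bar p$ near $x_j$ is fixed by the sign of $\bar y(x_j)-d_j$, and the punctured ball is connected, so the constant is well defined.) Let $\chi\in C^\infty_c(\O)$ with $\chi=1$ on $\bigcup_{j\in\cJ} B(x_j,\epsilon/2)$ and $\operatorname{supp}\chi\subset\bigcup_{j\in\cJ} B_j$.

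On $\O_\epsilon=\O\setminus\bigcup_{j\in\cJ}\overline{B(x_j,\epsilon/2)}$, the decomposition \eqref{eq:AdjointDecomposition} writes $\bar p$ as $\tilde p\in H^2(\O)$ from \eqref{eq:tildep} plus finitely many terms $\rho(x)\Gamma(x-x_j)$. Each such term is $C^\infty$ on $\bar\O_\epsilon$ because $\rho$ is smooth and $\Gamma(\cdot-x_j)$ is smooth away from $x_j$. Hence $\bar p\in H^2(\O_\epsilon)$. By Sobolev embedding, $\nabla\bar p\in H^1(\O_\epsilon)\hookrightarrow L^{s}(\O_\epsilon)$, with $s=6$ for $d=3$ and any finite $s$ for $d=2$. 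Together with $\bar p\in L^\infty(\O_\epsilon)$, which follows from \eqref{eq:Sobolev}, this gives $\bar p\in W^{1,s_d}(\O_\epsilon)$. Here one uses that $\O_\epsilon$ is a Lipschitz domain, being $\O$ minus finitely many disjoint interior balls, so that the embedding theorems apply.

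The map $t\mapsto\max(a,\min(b,-t/\beta))$ is Lipschitz, and such maps preserve $W^{1,s}$: this is the Stampacchia chain rule, $\nabla\max(0,v)=\mathbbm{1}_{\{v>0\}}\nabla v$. Therefore \eqref{eq:Control} yields $\bar u\in W^{1,s_d}(\O_\epsilon)$. To finish, write $\bar u=(1-\chi)\bar u+\chi\bar u$.
\begin{itemize}
\item The term $(1-\chi)\bar u$ is supported in $\overline{\O_\epsilon}$. Extended by zero across the balls, it lies in $W^{1,s_d}(\O)$, since it vanishes on a neighborhood of each $\partial B(x_j,\epsilon/2)$.
\item The term $\chi\bar u$ is a smooth function times a piecewise constant, with the constants sitting on the disjoint balls $B_j\supset\operatorname{supp}\chi$. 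It is therefore in $C^\infty_c(\O)$.
\end{itemize}
This proves \eqref{eq:OCRegularity}.

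The main point requiring care is the choice of $\epsilon$, namely checking that $\bar u$ is genuinely constant on a full neighborhood of each $x_j$. This follows from the logarithmic or $1/|x|$ blow-up of $\bar p$ together with the boundedness of $\tilde p$. Everything else is a standard application of Sobolev embeddings and the chain rule for Lipschitz truncations.
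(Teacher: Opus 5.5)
Your proof is correct and uses the same ingredients as the paper's proof: constancy of $\bar u$ near the points $x_j$, $j\in\cJ$, $H^2$-regularity of $\bar p$ away from them, the embedding $H^2\hookrightarrow W^{1,s_d}$, and the Lipschitz (Stampacchia) chain rule. The only difference is where the localization happens: the paper modifies $\bar p$ near those points into some $q\in H^2(\O)\cap H^1_0(\O)$ with the same truncation, so it applies the chain rule once on all of $\O$ and needs neither the perforated domain $\O_\epsilon$ nor a gluing step. One small slip: with $\chi=1$ only on the open balls $B(x_j,\epsilon/2)$, $(1-\chi)\bar u$ need not vanish on a neighborhood of $\p B(x_j,\epsilon/2)$. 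This is harmless, because on each $B_j$ the function $(1-\chi)\bar u$ is $1-\chi$ times a constant and hence smooth there; alternatively, take $\chi=1$ on slightly larger balls.
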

\begin{proof}  In view of Remark~\ref{rem:AdjointStateRegularity} and
 \eqref{eq:OCConstant}, we can modify $\bar p$ near the tracking points in $\{x_j:\,j\in\cJ\}$
 so that
\begin{equation*}
  \bar u=\max(a,\min(b,-q/\beta))
\end{equation*}
 for a function $q\in H^2(\O)\cap H^1_0(\O)$.
   The relation \eqref{eq:OCRegularity} follows immediately
 from \cite[Lemma~7.6]{GT:2001:EllipticPDE} and the
  Sobolev embedding $H^2(\O)\hookrightarrow W^{1,s_d}(\O)$
 (cf. \cite{ADAMS:2003:Sobolev}).
\end{proof}
\begin{remark}\label{rem:StrongerResult}
  It was shown in \cite{BMV:2019:Point} that $\bar u\in W^{1,\infty}(\O)$.
  But the weaker result \eqref{eq:OCRegularity} is sufficient for the error analysis
  in Section~\ref{sec:Errors}.
\end{remark}
\subsection{KKT Conditions}\label{subsec:KKT}
 There exist $\lambda_a,\lambda_b\in\LT$ such that
\begin{align}
 &\sum_{j=1}^J[\bar y(x_j)-d_j)]y(x_j)+\beta\int_\O \bar u\,u\,dx=
 \int_\O(\lambda_a +\lambda_b)u\,dx \label{eq:KKT1}\\
 \intertext{for all $(y,u)\in H^1_0(\O)\times\LT$ that satisfy
 \eqref{eq:StateEquation},}
 & \lambda_a\geq 0 \quad\text{and}\quad \lambda_b\leq0,\label{eq:KKT2}\\
 &\int_\O \lambda_a(u-a)dx=0 \quad\text{and}\quad
 \int_\O\lambda_b(u-b)dx=0.\label{eq:KKT3}
\end{align}
\par
 Indeed, it follows from \eqref{eq:Adjoint} that
\begin{equation*}
  \sum_{j=1}^J[\bar y(x_j)-d_j]y(x_j)+\beta\int_\O \bar u\,u\,dx
  =\int_\O (\bar p+\beta \bar u)u\,dx
\end{equation*}
 for all $(y,u)\in H^1_0(\O)\times\LT$ that satisfy
 \eqref{eq:StateEquation}.
 Therefore \eqref{eq:KKT1}
 and \eqref{eq:KKT2} hold for
\begin{equation}\label{eq:Multiplier}
  \lambda_a=\max(0,\bar p+\beta\bar u) \quad\text{and}\quad
  \lambda_b=\min(0,\bar p+\beta\bar u),
\end{equation}
 and then the complementarity conditions in \eqref{eq:KKT3}
 follow from \eqref{eq:Control}.
\par
 We conclude from Remark~\ref{rem:AdjointStateRegularity},
 \eqref{eq:OCRegularity} and \eqref{eq:Multiplier}
 that
\begin{equation}\label{eq:MultiplierRegularity}
  \text{$\lambda_a$ and $\lambda_b$ belong to $W^{1,s_d}$ away
  from the tracking points in $\{x_j:\,j\in\cJ\}$.}
\end{equation}
\section{Discrete Problems}\label{sec:Discrete}
 Let $\cT_h$ be a quasi-uniform simplicial triangulation of $\O$ and
 $V_h\subset H^1_0(\O)$ be the $P_1$ finite element
 space associated with $\cT_h$.
 We will approximate the optimal state by functions in $V_h$.
 For the approximation of
 the optimal control, we have two choices.
\subsection{A Finite Element Discretization}\label{subsec:FEM}
 Let $W_h\subset\LT$ be the $P_0$ finite element space associated with $\cT_h$.
 The discrete problem is to find
\begin{equation}\label{eq:DOCP}
  (\bar y_h,\bar u_h)=\argmin_{(y_h,u_h)\in K_h}\frac12
  \Big(\sum_{j=1}^J [y_h(x_j)-d_j]^2+\beta
  \|u_h\|_\LT^2\Big),
\end{equation}
 where $(y_h,u_h)\in V_h\times W_h$ belongs to the discrete constraint
  set $K_h$ if and only if
\begin{alignat}{3}
  \int_\O \nabla y_h\cdot\nabla z_h dx&=\int_\O u_hz_hdx
  &\qquad&\forall\,z_h\in V_h,\label{eq:DStateEq}\\
  a&\leq Q_h u_h\leq b &\qquad&\text{almost everywhere in $\O$}.
  \label{eq:DControlConstraint}
\end{alignat}
 Here $Q_h$ is the orthogonal projection from $\LT$ onto $W_h$.
\par
As in the case of the continuous problem, the discrete
 problem \eqref{eq:DOCP}--\eqref{eq:DControlConstraint} has a unique
 solution characterized by the
 first order optimality condition
\begin{equation}\label{eq:DVI}
  \sum_{j=1}^J[\bar y_h(x_j)-d_j][y_h(x_j)-\bar y_h(x_j)]
  +\beta\int_\O \bar u_h(u_h-\bar u_h)dx
  \geq0 \qquad\forall\,(y_h,u_h)\in K_h.
\end{equation}
\par
 Again the discrete variational inequality \eqref{eq:DVI} is equivalent
 to the following relations:
\begin{alignat}{3}
\int_\O \nabla\bar y_h\cdot\nabla z_hdx&=\int_\O \bar u_h z_hdx
&\qquad&\forall\,z_h\in V_h,
\label{eq:DState}\\
\int_\O \nabla\bar p_h\cdot \nabla q_h\,dx&=\sum_{j=1}^J (\bar y_h(x_j)-d_j)q_h(x_j)
&\qquad&\forall\,q_h\in V_h,\label{eq:DAdjoint}\\
\intertext{where $\bar p_h\in V_h$, and}
 \bar u_h&=\max(a,\min(b,-Q_h(\bar p_h/\beta)).\label{eq:DControl}
\end{alignat}
\subsection{A Variational Discretization}\label{subsec:Variational}
Following \cite{Hinze:2005:Control}, we replace $W_h$ by $W=\LT$ in
\eqref{eq:DOCP}--\eqref{eq:DControlConstraint},
where $Q_h:\LT\longrightarrow W$ is now the identity operator.
The relations \eqref{eq:DVI}--\eqref{eq:DControl} remain valid.
\section{Set-Up for the Convergence Analysis}\label{sec:SetUp}
 We follow the approach in \cite{BSung:2025:NewAnalysis}.
  Let $(\ty_h,\tu_h)\in K_h$ be defined by
 \begin{equation}\label{eq:tuh}
   \tu_h=Q_h\bar u
 \end{equation}
  and
\begin{equation}\label{eq:tyh}
  \int_\O \nabla\ty_h\cdot\nabla z_h dx=\int_\O \tu_h z_hdx
  \qquad\forall\,z_h\in V_h,
\end{equation}
  and $\tilde p_h\in V_h$ be defined by
\begin{equation}\label{eq:tildeph}
  \int_\O \nabla \tilde p_h\cdot\nabla q_h\,dx=\sum_{j\in\cJ}(\bar y(x_j)-d_j)q_h(x_j)
  \qquad\forall\,q_h\in V_h.
\end{equation}
\begin{remark}\label{rem:Qhbaru}
  For the variational discretization in Section~\ref{subsec:Variational}, we have $\tu_h=\bar u$
  because $Q_h$ is the identity operator.
  For the finite element discretization in Section~\ref{subsec:FEM},
  the relation \eqref{eq:OCConstant} implies that $\tu_h=Q_h\bar u=\bar u$ in fixed
   neighborhoods of the tracking points in $\{x_j:j\in\cJ\}$, provided that $h$ is sufficiently small.
\end{remark}
\par
 We obtain, by \eqref{eq:State}, \eqref{eq:tyh} and \eqref{eq:tildeph},
\begin{align}\label{eq:Representation}
  \sum_{j\in\cJ}[\bar y(x_j)-d_j][\ty_h(x_j)-\bar y_h(x_j)]
  =\int_\O \nabla \tilde p_h\cdot\nabla(\ty_h-\bar y_h)dx
  =\int_\O (\tu_h-\bar u_h)\tilde p_hdx.
\end{align}
\par
 Note that, for the discretization in Section~\ref{subsec:FEM}, we have
\begin{equation}\label{eq:QhBdd}
  \|\bar u-\tu_h\|_\LT\leq C_\sharp h
\end{equation}
 by \eqref{eq:OCRegularity}, \eqref{eq:tuh} and a standard estimate for $Q_h$
 (cf. \cite{Ciarlet:1978:FEM,BScott:2008:FEM}), and $C_\sharp=0$  for the
 variational discretization
  in Section~\ref{subsec:Variational}
 because in this case $\tu_h=Q_h\bar u=\bar u$.
\par
We can write
\begin{align}\label{eq:Splitting1}
  &\sum_{j=1}^J[\bar y(x_j)-\bar y_h(x_j)]^2+\beta\|\bar u-\bar u_h\|_\LT^2\notag\\
  &\hspace{30pt}=\sum_{j=1}^J[\bar y(x_j)-\bar y_h(x_j)][\bar y(x_j)-\ty_h(x_j)]
  +\beta\int_\O (\bar u-\bar u_h)(\bar u-\tu_h)dx\\
  &\hspace{50pt}+\sum_{j=1}^J[\bar y(x_j)-\bar y_h(x_j)][\ty_h(x_j)-\bar y_h(x_j)]
  +\beta\int_\O (\bar u-\bar u_h)(\tu_h-\bar u_h)dx,
  \notag
\end{align}
 and it follows from \eqref{eq:DVI} and \eqref{eq:Representation} that
\begin{align}\label{eq:Splitting2}
  &\sum_{j=1}^J[\bar y(x_j)-\bar y_h(x_j)][\ty_h(x_j)-\bar y_h(x_j)]
  +\beta\int_\O (\bar u-\bar u_h)(\tu_h-\bar u_h)dx\notag\\
  &\hspace{30pt}
  =\sum_{j=1}^J\bar y(x_j)[\ty_h(x_j)-\bar y_h(x_j)]
  +\beta\int_\O \bar u(\tu_h-\bar u_h)dx\notag\\
  &\hspace{50pt}-\sum_{j=1}^J\bar y_h(x_j)[\ty_h(x_j)-\bar y_h(x_j)]
  -\beta\int_\O \bar u_h(\tu_h-\bar u_h)dx\\
  &\hspace{30pt}
  \leq \sum_{j\in\cJ}[\bar y(x_j)-d_j][\ty_h(x_j)-\bar y_h(x_j)]
  +\beta\int_\O \bar u(\tu_h-\bar u_h)dx\notag\\
  &\hspace{30pt}
  =\int_\O (\tu_h-\bar u_h)\tilde p_h dx+\beta\int_\O \bar u(\tu_h-\bar u_h)dx\notag\\
  &\hspace{30pt}
  =\int_\O (\tilde p_h-\bar p)(\tu_h-\bar u_h)dx+
  \int_\O (\bar p+\beta\bar u)(\tu_h-\bar u_h)dx.\notag
\end{align}
 Moreover, we have
\begin{align*}
  \int_\O (\bar p+\beta\bar u)(\tu_h-\bar u_h)dx
    &=\int_\O (\lambda_a+\lambda_b)(\tu_h-\bar u_h)dx\\
    &=\int_\O \lambda_a(Q_h\bar u-\bar u)dx+\int_\O\lambda_b(Q_h\bar u-\bar u)dx\\
    &\hspace{30pt}+\int_\O \lambda_a(\bar u-a)dx+\int_\O\lambda_b(\bar u-b)dx\\
    &\hspace{50pt}+\int_\O\lambda_a(a-\bar u_h)dx+\int_\O\lambda_b(b-\bar u_h)dx\\
    &\leq\int_\O (\lambda_a-Q_h\lambda_a)(Q_h\bar u-\bar u)dx
    +\int_\O(\lambda_b-Q_h\lambda_b)(Q_h\bar u-\bar u)dx
\end{align*}
 by  \eqref{eq:KKT1}--\eqref{eq:KKT3} and \eqref{eq:tuh}, and hence,
 because of \eqref{eq:MultiplierRegularity}, Remark~\ref{rem:Qhbaru} and \eqref{eq:QhBdd},
\begin{equation}\label{eq:Estimate1}
  \int_\O (\bar p+\beta\bar u)(\tu_h-\bar u_h)dx\leq C_1h^2
\end{equation}
 for the discretization in Section~\ref{subsec:FEM}, and we can
 take $C_1$ to be $0$ for the
 variational discretization in Section~\ref{subsec:Variational}
 because in this case $Q_h\bar u=\bar u$.
\par
 Combining \eqref{eq:Splitting1}--\eqref{eq:Estimate1}, we arrive at the crucial  estimate
\begin{align}\label{eq:SetUp}
   &\sum_{j=1}^J[\bar y(x_j)-\bar y_h(x_j)]^2+\beta\|\bar u-\bar u_h\|_\LT^2\notag\\
    &\hspace{30pt}\leq\sum_{j=1}^J[\bar y(x_j)-\bar y_h(x_j)][\bar y(x_j)-\ty_h(x_j)]
    +\beta\int_\O (\bar u-\bar u_h)(\bar u-\tu_h)dx\\
        &\hspace{60pt}+\int_\O (\tilde p_h-\bar p)(\tu_h-\bar u_h)dx+C_1h^2.\notag
\end{align}
\begin{remark}\label{rem:MeshSize}
  For the finite element discretization in Section~\ref{subsec:FEM},
  the estimate \eqref{eq:SetUp} is valid for sufficiently small $h$, while it is valid
  for any $h$ in the case of the variational discretization in Section~\ref{subsec:Variational}.
\end{remark}
\section{Error Estimates}\label{sec:Errors}
 In view of \eqref{eq:QhBdd}, the key is to estimate the first
 sum and the second integral on the right-hand side of
 \eqref{eq:SetUp}.
\subsection{Estimate for $|\bar y(x_j)-\ty_h(x_j)|$}\label{subsection:KeyEstimate1}
 We begin with the discretization in Section~\ref{subsec:FEM}.
 Let $\ty\in H^1_0(\O)$ be defined by
\begin{equation}\label{eq:ty}
  \int_\O \nabla\ty\cdot\nabla z\,dx=\int_\O \tu_hz\,dx\qquad\forall\,z\in H^1_0(\O).
\end{equation}
It follows from \eqref{eq:ERBdd}, \eqref{eq:Sobolev},
\eqref{eq:State}, \eqref{eq:QhBdd} and  \eqref{eq:ty}
that
\begin{equation}\label{eq:Key1}
  |\bar y(x_j)-\tilde y(x_j)|\leq C\|\bar y-\tilde y\|_{H^2(\O)}
  \leq C\|\bar u-\tu_h\|_\LT\leq Ch.
\end{equation}
\par
 Note that \eqref{eq:tyh} and \eqref{eq:ty} imply
\begin{equation}\label{eq:tyhAndty}
 \ty_h= R_h\ty,
\end{equation}
 where $R_h:H^1_0(\O)\longrightarrow V_h$ is the Ritz projection operator defined by
\begin{equation}\label{eq:Rh}
  \int_\O \nabla(R_h\zeta)\cdot\nabla v_hdx=
  \int_\O \nabla\zeta\cdot\nabla v_hdx\qquad\forall\,\zeta\in H^1_0(\O),
  v_h\in V_h.
\end{equation}
 Therefore we can estimate $|\ty(x_j)-\ty_h(x_j)|$ by the interior
 maximum norm estimate in
 \cite[Corollary~5.1]{SW:1977:IntMaxNorm} or \cite[Theorem~10.1]{Wahlbin:2000:Handbook}.
\par
 Let $B(x_j,\epsilon_j)$ ($1\leq j\leq J$) be an open disc/ball centered at
 $x_j$ with radius $\epsilon_j$
 such that the closed balls $\bar{B}(x_j,\epsilon_j)$ are mutually disjoint
  compact subsets of $\O$.
 It follows from \eqref{eq:tyhAndty}, an interior maximum norm estimate,
 a standard $\LT$ error estimate (cf. \cite{Ciarlet:1978:FEM,BScott:2008:FEM})
 and the convexity of
 $\O$ that
\begin{align}\label{eq:InteriorMaxNorm}
  |\tilde y(x_j)-\tilde y_h(x_j)|&\leq C\big(|\ln h|\inf_{v\in V_h}\|\ty-v\|_{L^\infty(B(x_j,\epsilon_j/4))}
    +\|\ty-\ty_h\|_\LT\big)\\
  &\leq
  C\big(|\ln h|\inf_{v\in V_h}\|\tilde y-v\|_{L^\infty(B(x_j,\epsilon_j/4))}+h^2\|\tu_h\|_\LT).\notag
\end{align}
\par
 One can show by a standard interpolation error estimate (cf. \cite[Theorem~3.15]{Ciarlet:1978:FEM}
 and \cite[Corollary~4.4.7]{BScott:2008:FEM}) and
 an interior $W^{2,p}$ estimate (cf. Lemma~\ref{lem:Interior}) that, for $h$ sufficiently small,
\begin{align}\label{eq:CZ}
  \inf_{v\in V_h}\|\tilde y-v\|_{L^\infty(B(x_j,\epsilon_j/4))}
  &\leq Ch^{2-(d/p)}|\tilde y|_{W^{2,p}(B(x_j,\epsilon_j/2))}\\
  &\leq Ch^{2-(d/p)}\big(\|\tu_h\|_\LT+p\|\tilde u_h\|_{L^\infty(B(x_j,3\epsilon_j/4))}\big)
  \qquad\forall\, p>1.\notag
\end{align}
 Here the positive constant
 $C$ is independent of $h$ and $p$.
\par
 It follows from \eqref{eq:ControlConstraint}, \eqref{eq:tuh}, \eqref{eq:InteriorMaxNorm} and \eqref{eq:CZ}
 (with $p=|\ln h|$) that
\begin{equation}\label{eq:Key2}
   |\tilde y(x_j)-\tilde y_h(x_j)|\leq C_\flat h^2|\ln h|^2.
\end{equation}
\par
 Putting \eqref{eq:Key1} and \eqref{eq:Key2} together, we obtain
\begin{equation}\label{eq:KeyEstimate1}
  |\bar y(x_j)-\tilde y_h(x_j)|\leq C_2h.
\end{equation}
\par
 For the variational discretization in Section~\ref{subsec:Variational},
 we have $\tu_h=\bar u$ that
 implies $\tilde y=\bar y$.  Note that $\bar y$ belongs to $W^{3,s_d}_{loc}(\O)$
 by \eqref{eq:OCRegularity}  and interior elliptic regularity
  (cf. \cite[Theorem~9.19]{GT:2001:EllipticPDE}).  Consequently
 $\bar y$ belongs to $C^2(\O)$ by the Sobolev embedding
 $W^{1,s}_{loc}(\O)\hookrightarrow C(\O)$ for
 $s>d$ (cf. \cite{ADAMS:2003:Sobolev}). It then follows from the
 interior maximum norm estimate that
\begin{equation}\label{eq:VDEstimate}
  |\tilde y(x_j)-\tilde y_h(x_j)|\leq C_\natural h^2|\ln h|.
\end{equation}
\subsection{Estimate for $\d\int_\O (\tilde p_h-\bar p)(\tu_h-\bar u_h)dx$}
\label{subsec:KeyEstimate2}
 Following \cite{Scott:1973:Singular,Casas:1985:Singular} we
  estimate the integral by a duality argument.
 Let $\psi\in H^2(\O)\cap H^1_0(\O)$ be defined by
\begin{equation}\label{eq:psi}
  \int_\O \nabla\psi\cdot\nabla z\,dx=\int_\O (\tu_h-\bar u_h)z\,dx\qquad\forall\,z\in H^1_0(\O).
\end{equation}
\par
 Then we have, by \eqref{eq:Adjoint}, \eqref{eq:tildeph}, \eqref{eq:Rh} and \eqref{eq:psi},
\begin{align}\label{eq:DualityEstimate1}
  \int_\O (\tilde p_h-\bar p)(\tilde u_h-\bar u_h)dx
  &=\int_\O \nabla\psi\cdot\nabla \tilde p_h dx+\int_\O\bar p \Delta\psi\,dx\notag\\
  &=\int_\O \nabla(R_h\psi)\cdot \nabla\tilde p_h dx-\sum_{j\in\cJ}[\bar y(x_j)-d_j]\psi(x_j)\\
  &=\sum_{j\in\cJ}[\bar y(x_j)-d_j][(R_h\psi)(x_j)-\psi(x_j)].\notag
\end{align}
\par
 Again, by \eqref{eq:psi} and the interior maximum norm estimate, we have
\begin{equation}\label{eq:DualityEstimate2}
  |(R_h\psi)(x_j)-\psi(x_j)|\leq C h^2|\ln h|^2\big(
  \|\tu_h-\bar u_h\|_{L^\infty(B(x_j,3\epsilon_j/4))}+\|\tu_h-\bar u_h\|_\LT\big),
\end{equation}
 and hence, in view of \eqref{eq:ControlConstraint}, \eqref{eq:DControl}, \eqref{eq:tuh} and \eqref{eq:DualityEstimate1},
\begin{equation}\label{eq:DualityEstimate3}
  \int_\O (\tilde p_h-\bar p)(\tu_h-\bar u_h)dx\leq C_3h^2|\ln h|^2.
\end{equation}
%
\subsection{Error Estimate for the Finite Element Discretization in Section~\ref{subsec:FEM}}
\label{subsec:ErrorFEM}
Putting \eqref{eq:QhBdd}, \eqref{eq:SetUp}, \eqref{eq:KeyEstimate1}
and \eqref{eq:DualityEstimate3}
together, we arrive at the estimate
\begin{align}\label{eq:FEMError1}
   &\sum_{j=1}^J[\bar y(x_j)-\bar y_h(x_j)]^2+\beta\|\bar u-\bar u_h\|_\LT^2\\
    &\hspace{30pt}\leq C_2h\sum_{j=1}^J|\bar y(x_j)-\bar y_h(x_j)|
    +\beta C_\sharp h\|\bar u-\bar u_h\|_\LT
        +C_3h^2|\ln h|^2+C_1h^2.\notag
\end{align}
\par
 It then follows from \eqref{eq:FEMError1} and Young's inequality
\begin{equation}\label{eq:Young}
   xy\leq \frac{x^2}{2\epsilon}+\frac{\epsilon y^2}{2}
   \qquad\forall\,x,y\geq0 \quad\text{and}\quad \epsilon>0
\end{equation}
 that
\begin{equation}\label{eq:FEMError}
  \sum_{j=1}^J[\bar y(x_j)-\bar y_h(x_j)]^2+\beta\|\bar u-\bar u_h\|_\LT^2\leq Ch^2|\ln h|^2.
\end{equation}
%
\subsection{Error Estimate for the Variational Discretization in Section~\ref{subsec:Variational}}
\label{subsec:VD}
 Since $\tu_h=Q_h\bar u=\bar u$, $\ty=\bar y$ and $C_1=0$, we obtain from
 \eqref{eq:SetUp}, \eqref{eq:Key2} and \eqref{eq:DualityEstimate3}
 the estimate
\begin{align}\label{eq:VDError1}
   &\sum_{j=1}^J[\bar y(x_j)-\bar y_h(x_j)]^2+\beta\|\bar u-\bar u_h\|_\LT^2\\
  &\hspace{50pt}
    \leq C_\natural h^2|\ln h|\sum_{j=1}^J|\bar y(x_j)-\bar y_h(x_j)|
           +C_3h^2|\ln h|^2.\notag
\end{align}
\par
 It follows from \eqref{eq:Young} and \eqref{eq:VDError1} that
\begin{equation}\label{eq:VDError}
  \sum_{j=1}^J[\bar y(x_j)-\bar y_h(x_j)]^2+\beta\|\bar u-\bar u_h\|_\LT^2\leq Ch^2|\ln h|^2.
\end{equation}

%

\subsection{Improved Error Estimates in Two Dimensions}\label{subsec:2DFEM}
 Both estimates \eqref{eq:FEMError} and \eqref{eq:VDError} can be improved when $d=2$.
%
\subsubsection{Finite Element Discretization in Section~\ref{subsec:FEM}}
\label{subsubsec:FEMImprovement}
\par
 In this case we can replace the estimate \eqref{eq:DualityEstimate2}
 by the pointwise estimate
\begin{equation}\label{eq:SubOptimal}
  |(R_h\psi)(x_j)-\psi(x_j)|\leq \|R_h\psi-\psi\|_{L^\infty(\O)}\leq Ch|\psi|_{H^2(\O)}
\end{equation}
 (cf. \cite[Exercise~3.3.2]{Ciarlet:1978:FEM}) so that
\begin{align}\label{eq:FEMBetterDuality}
  \int_\O (\tilde p_h-\bar p)(\tu_h-\bar u_h)dx&\leq C_4h\|\tu_h-\Bar u_h\|_\LT\\
  &\leq C_4h\big(\|Q_h\bar u-\bar u\|_\LT+\|\bar u-\bar u_h\|_\LT)\notag
\end{align}
 by \eqref{eq:ERBdd}, \eqref{eq:tuh}, \eqref{eq:psi},
 \eqref{eq:DualityEstimate1} and \eqref{eq:SubOptimal}.
\par
 Taking  \eqref{eq:QhBdd} and \eqref{eq:FEMBetterDuality} into account,
 the estimate \eqref{eq:FEMError1} becomes
\begin{align}\label{eq:2DFEMError1}
   &\sum_{j=1}^J[\bar y(x_j)-\bar y_h(x_j)]^2+\beta\|\bar u-\bar u_h\|_\LT^2\\
    &\hspace{30pt}\leq C_2h\sum_{j=1}^J|\bar y(x_j)-\bar y_h(x_j)|
    +(\beta C_\sharp+C_4) h\|\bar u-\bar u_h\|_\LT
       +(C_4C_\sharp+C_1 )h^2.\notag
\end{align}
 It follows from \eqref{eq:Young} and \eqref{eq:2DFEMError1} that
\begin{equation}\label{eq:2DFEMBetterEstimate}
  \sum_{j=1}^J[\bar y(x_j)-\bar y_h(x_j)]^2+
  \beta\|\bar u-\bar u_h\|_\LT^2\leq C(1+\beta^{-1})h^2.
\end{equation}
\subsubsection{Variational Discretization in Section~\ref{subsec:Variational}}
\label{subsubsec:VariationalImprovement}
 In this case
 the relation \eqref{eq:DControl} becomes
\begin{equation}\label{eq:VDControl}
   \bar u_h=\max(a,\min(b,-(\bar p_h/\beta)).
\end{equation}
 Note that $\bar y_h(x_j)$ converges to $\bar y(x_j)$ as $h$ decreases to $0$ by
 \eqref{eq:VDError}.  At a tracking point $x_j$ where $\bar y(x_j)-d_j\neq0$,
 the relation \eqref{eq:DAdjoint}, the growth
  of the discrete Green's function
 associated with a tracking point (cf. Lemma~\ref{lem:DG})
 and \eqref{eq:VDControl} imply that $\bar u_h$ equals the same constant as $\bar u$ in a neighborhood
 $B(x_j,\delta_j)$ for $h$ sufficiently small.
 Therefore, by \eqref{eq:Sobolev} and the Poisson integral formula
  for harmonic functions  (cf. \cite[Equation~2.26]{GT:2001:EllipticPDE}),
  the function $\psi$ defined by \eqref{eq:psi} satisfies
\begin{equation*}
  \|\psi\|_{W^{2,2}B(x_j,\delta_j/2)}\leq C\|\bar u-\bar u_h\|_\LT
\end{equation*}
 because $\tu_h=\bar u$,
 and
 the estimate \eqref{eq:DualityEstimate2} can be replaced by
\begin{equation}\label{eq:DualityEstimate4}
   |(R_h\psi)(x_j)-\psi(x_j)|\leq C h^2|\ln h|\|\bar u-\bar u_h\|_\LT.
\end{equation}
\par
 We deduce from \eqref{eq:DualityEstimate1} and \eqref{eq:DualityEstimate4} that
  \eqref{eq:DualityEstimate3} can be improved to
\begin{equation}\label{eq:NewAdjointEstimate}
  \int_\O (\tilde p_h-\bar p)(\bar u-\bar u_h)dx\leq C_5h^2|\ln h|\|\bar u-\bar u_h\|_\LT.
\end{equation}
\par
 In view of \eqref{eq:NewAdjointEstimate},
  the estimate \eqref{eq:VDError1}  becomes
\begin{align*}
   &\sum_{j=1}^J[\bar y(x_j)-\bar y_h(x_j)]^2+\beta\|\bar u-\bar u_h\|_\LT^2\\
    &\hspace{30pt}\leq C_\natural h^2|\ln h|\sum_{j=1}^J|\bar y(x_j)-\bar y_h(x_j)|
           +C_5h^2|\ln h|\|\bar u-\bar u_h\|_\LT,\notag
\end{align*}
which together with \eqref{eq:Young} implies
\begin{equation}\label{eq:2VDError}
  \sum_{j=1}^J[\bar y(x_j)-\bar y_h(x_j)]^2+
  \beta\|\bar u-\bar u_h\|_\LT^2\leq C(1+\beta^{-1}) h^4|\ln h|^2.
\end{equation}
\section{Concluding Remarks} \label{sec:Conclusion}
 We have developed a concise convergence analysis that is based on the
  estimate \eqref{eq:SetUp} where two crucial terms
 for the error estimate are identified.
 Standard finite element techniques combined with classical results
  in elliptic partial differential equations
  then enable us to obtain the error estimates
 \eqref{eq:FEMError}, \eqref{eq:VDError}, \eqref{eq:2DFEMBetterEstimate}
 and \eqref{eq:2VDError}
 for the finite element and variational discretizations in two and
 three dimensions that appeared in
 \cite{CGY:2015:Point,BDE:2016:Points,AOS:2018:Weighted,BMV:2019:Point,VM:2025:EOCP}.
\par
 We have used the Poisson equation constraint \eqref{eq:StateEquation} for simplicity.
 But the results also hold for
 a general elliptic equation constraint
\begin{equation}\label{eq:General}
  \int_\O \Big[\sum_{i,j=1}^d a_{ij}(x)\frac{\p y}{\p x_i}
    \frac{\p z}{\p x_j}+\sum_{i=1}^da_i(x)
    \frac{\p y}{\p x_i}z+a(x)yz\Big]dx=\int_\O uz\,dx
    \qquad\forall\,z\in H^1_0(\O),
\end{equation}
 provided the coefficients are sufficiently smooth and the problem
 \eqref{eq:General} is well-posed.
\par
 One can also apply the finite element discretization to the optimal control
 problem \eqref{eq:OCP}--\eqref{eq:ControlConstraint}
 where the constants $a$ and $b$ are replaced by two functions $a(x)$ and $b(x)$,
 and \eqref{eq:DControlConstraint} is replaced by
   $Q_ha\leq u_h\leq Q_hb$.
 An examination of the arguments shows that the error estimates in Section~\ref{subsec:ErrorFEM} and
 Section~\ref{subsubsec:FEMImprovement} are valid
  under the assumptions that $a(x),b(x)\in H^2(\O)$ and $a(x)< b(x)$ in $\O$.
%
\appendix

\section{An Interior $W^{2,p}$ Estimate}\label{append:Interior}
 Let $y\in H^1_0(\O)$ and $u\in L^\infty(\O)$ satisfy the
 constraint \eqref{eq:StateEquation}.
 Given a point $x_*\in\O$, let $\epsilon_*$ be a small number
 such that the closed ball
 $\bar B(x_*,\epsilon_*)$ is a subset of $\O$.
\begin{lemma}\label{lem:Interior}
 We have
\begin{equation}\label{eq:InteriorEstimate}
   |y|_{W^{2,p}(B(x_*,\epsilon_*/2))}\leq
   C\big(\|u\|_\LT+p\|u\|_{L^\infty(B(x_*,3\epsilon_*/4))}\big)
   \qquad\text{for all} \quad 1<p<\infty,
\end{equation}
 where the positive constant $C$ is independent of $p$.
\end{lemma}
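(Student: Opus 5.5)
We localize with a cutoff and then apply the Calderón--Zygmund estimate for the Newtonian potential on all of $\R^d$, whose $L^p$ operator norm grows linearly in $p$.

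Let $\eta\in C^\infty_c(B(x_*,3\epsilon_*/4))$ with $\eta=1$ on $B(x_*,\epsilon_*/2)$, and set $w=\eta y$, extended by zero to $\R^d$. Then
\begin{equation*}
\Delta w=\eta\Delta y+2\nabla\eta\cdot\nabla y+y\Delta\eta
=-\eta u+2\nabla\eta\cdot\nabla y+y\Delta\eta=:f ,
\end{equation*}
and $f$ is supported in $B(x_*,3\epsilon_*/4)$. Since $w$ has compact support, $w=\Gamma*f$ with $\Gamma$ from \eqref{eq:FS}, so $D^2w=D^2\Gamma*f$ is a Calderón--Zygmund singular integral of $f$. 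The classical constant (cf. \cite[Theorem~9.9 and its proof]{GT:2001:EllipticPDE}) gives
\begin{equation*}
\|D^2 w\|_{L^p(\R^d)}\le C p\,\|f\|_{L^p(\R^d)}\qquad\text{for }2\le p<\infty .
\end{equation*}
This follows from the weak $(1,1)$ bound together with the $L^2$ bound, via Marcinkiewicz interpolation and duality; the interpolation constant is $O(p)$ as $p\to\infty$. For $1<p<2$ the bound \eqref{eq:InteriorEstimate} follows from the case $p=2$ by Hölder on the bounded ball, with a constant independent of $p$. So it suffices to treat $p\ge 2$.

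It remains to bound $\|f\|_{L^p}$. The term $\eta u$ contributes at most $C\|u\|_{L^\infty(B(x_*,3\epsilon_*/4))}$. The lower order terms $2\nabla\eta\cdot\nabla y+y\Delta\eta$ must be bounded in $L^\infty$, and hence in $L^p$ uniformly in $p$, by $C\|u\|_\LT$. This is the main obstacle, because a naive estimate would reintroduce $p$-dependence.

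The plan is as follows. On the annulus where $\nabla\eta\ne0$, write $y=y_1+y_2$, where $y_1=\Gamma*(\chi u)$ with $\chi$ the indicator of $B(x_*,\epsilon_*)$, and $y_2=y-y_1$ is harmonic in $B(x_*,\epsilon_*)$.
\begin{itemize}
\item Since $\Gamma$ and $\nabla\Gamma$ are locally integrable, $\|y_1\|_{W^{1,\infty}(B(x_*,\epsilon_*))}\le C\|u\|_{L^\infty(B(x_*,\epsilon_*))}$, which is uniform in $p$.
\item For the harmonic part, the mean value property (Poisson formula) gives $\|y_2\|_{W^{1,\infty}(B(x_*,7\epsilon_*/8))}\le C\|y_2\|_{L^2(B(x_*,\epsilon_*))}$. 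Then $\|y_2\|_{L^2}\le \|y\|_{L^2}+\|y_1\|_{L^2}$, and $\|y\|_{L^2}\le C\|u\|_\LT$ by \eqref{eq:ERBdd}, while $\|y_1\|_{L^2}\le C\|u\|_\LT$ by Young's inequality.
\end{itemize}
The $y_1$ bound uses $\|u\|_{L^\infty}$ over $B(x_*,\epsilon_*)$ rather than over $B(x_*,3\epsilon_*/4)$. This mismatch is removed by repeating the construction with radii shrunk by a fixed factor, i.e. applying it with $\epsilon_*$ replaced by $3\epsilon_*/4$ and adjusting the cutoff radii. Together these give $\|f\|_{L^\infty}\le C(\|u\|_\LT+\|u\|_{L^\infty(B(x_*,3\epsilon_*/4))})$.

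Finally, $|y|_{W^{2,p}(B(x_*,\epsilon_*/2))}\le\|D^2w\|_{L^p}\le Cp\|f\|_{L^p}$. The $\|u\|_\LT$ term then carries a factor $p$, which is worse than \eqref{eq:InteriorEstimate} states. To avoid this, split $w=\Gamma*(-\eta u)+\Gamma*g$ with $g=2\nabla\eta\cdot\nabla y+y\Delta\eta$. The function $g$ vanishes near $B(x_*,\epsilon_*/2)$, since $\nabla\eta=\Delta\eta=0$ where $\eta\equiv1$. Hence $\Gamma*g$ is harmonic on a neighborhood of $\bar B(x_*,\epsilon_*/2)$, and its second derivatives there are bounded, uniformly in $p$, by $C\|g\|_{L^1}\le C\|u\|_\LT$, using interior estimates for harmonic functions together with $\|y\|_{H^1}\le C\|u\|_\LT$. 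Only the term $\Gamma*(\eta u)$ uses the Calderón--Zygmund bound, and it contributes $Cp\|u\|_{L^\infty(B(x_*,3\epsilon_*/4))}$. This yields \eqref{eq:InteriorEstimate}.
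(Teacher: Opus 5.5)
Your final argument is correct and is essentially the paper's proof. Both localize with a cutoff, write the localized $y$ as the Newtonian potential of its Laplacian, and apply the Calder\'on--Zygmund bound with constant $O(p)$ only to the piece carrying $u$ on $B(x_*,3\epsilon_*/4)$; the remaining piece is bounded uniformly in $p$ by $C\|u\|_\LT$ using the positive distance between its support and $B(x_*,\epsilon_*/2)$. The differences are cosmetic: the paper splits the integral by region and uses $H^2$ regularity, while you split by the product rule and need only $\|y\|_{H^1(\O)}\le C\|u\|_\LT$. The $y_1,y_2$ detour is unnecessary.

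The one fix needed is to take $\eta\equiv1$ on a slightly larger ball, e.g.\ $B(x_*,5\epsilon_*/8)$. Having $\eta\equiv1$ only on $B(x_*,\epsilon_*/2)$ does not guarantee that $g$ vanishes on a neighborhood of $\bar B(x_*,\epsilon_*/2)$, and your gap argument needs that.
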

\begin{proof}
  Let $\rho$ be a $C^\infty$ function such that $\rho=1$
   on $B(x_*,3\epsilon_*/4)$ and
  $\rho(x)=0$ if $|x-x_*|>\epsilon_*$.  Then we have
   (cf. \cite[Theorem~9.9]{GT:2001:EllipticPDE})
\begin{equation}\label{eq:Interior1}
  \rho(x) y(x)=\int_\O \Gamma(x-z)\Delta_z(\rho(z) y(z))dz,
\end{equation}
 where $\G(x)$ is the fundamental solution of the Laplace operator in \eqref{eq:FS}.
\par
 We derive from \eqref{eq:StateEquation}, \eqref{eq:Interior1} that,
 for $|\alpha|=2$ and $x\in B(x_*,\epsilon_*/2)$,
\begin{align}\label{eq:Interior2}
  \frac{\p^\alpha y}{\p x^\alpha}(x)
  &=-\int_{B(x_*,3\epsilon_*/4)}\frac{\p^\alpha}{\p x^\alpha}\G(x-z)u(z)dz\\
  &\hspace{30pt}
 + \int_{\O\setminus B(x_*,3\epsilon_*/4)}\frac{\p^\alpha}{\p x^\alpha}\G(x-z)\Delta_z(\rho(z)y(z))dz
  =\phi_1(x)+\phi_2(x).\notag
\end{align}
\par
 It follows from the Calderon-Zigmund inequality that
 $\phi_1$ belongs to $L^p(B(x_*,\epsilon_*/2))$ and
\begin{equation}\label{eq:Interior3}
  \|\phi_1\|_{L^p(B(x_*,\epsilon_*/2))}\leq Cp\|u\|_{L^p(B(x_*,3\epsilon_*/4))}
  \qquad\forall\,p>1
\end{equation}
(cf. \cite[Theorem~9.8 and Theorem~9.9]{GT:2001:EllipticPDE}).
\par
 On the other hand, since there is a gap between $B(x_*,\epsilon_*/2)$
 and $\O\setminus B(x_*,3\epsilon_*/4)$,
 we can conclude that
\begin{equation}\label{eq:Interior4}
  \|\phi_2\|_{L^p(B(x_*,\epsilon_*/2))}\leq
  C\|\Delta(\rho y)\|_{L^2(\O\setminus B(x_*,3\epsilon_*/4))}
  \leq C\|u\|_{L_2(\O)},
\end{equation}
 where we have also used elliptic regularity (cf. Remark~\ref{rem:ER}).
\par
 The estimate \eqref{eq:InteriorEstimate} follows from
 \eqref{eq:Interior2}--\eqref{eq:Interior4}.
\end{proof}
%
\section{The Growth of Discrete Green's Functions in Two Dimensions}\label{append:DG}
 Let $x_*$ be a point in $\O$ and the discrete Green's function $G_{*,h}\in V_h$ be defined by
\begin{equation}\label{eq:DG1}
  \int_\O \nabla G_{*,h}\cdot\nabla v_h dx=v_h(x_*) \qquad\forall\,v_h\in V_h.
\end{equation}
\begin{lemma}\label{lem:DG}
  Given $M>0$, there exists $\epsilon_*>0$ independent of $h$
  such that $\bar B(x_*,\epsilon_*)\subset \O$ and
\begin{equation}\label{eq:DG2}
  G_{*,h}(x)>M \qquad\forall\,x\in B(x_*,\epsilon_*),
\end{equation}
 provided $h$ is sufficiently small.
\end{lemma}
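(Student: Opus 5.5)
We compare $G_{*,h}$ with the continuous Green's function and use the logarithmic blow-up of the latter near $x_*$ (here $d=2$). Let $G_*$ be the Green's function of $-\Delta$ on $\O$ with pole at $x_*$. As in \eqref{eq:FundamentalSolution}, choose $\rho\in C^\infty_c(\O)$ with $\rho=1$ on $B(x_*,\epsilon_0)$, $\bar B(x_*,2\epsilon_0)\subset\O$. Then
\begin{equation*}
  G_*(x)=-\rho(x)\Gamma(x-x_*)+w(x),\qquad w\in H^2(\O)\cap H^1_0(\O)\subset C(\bar\O),
\end{equation*}
so $G_*(x)\geq \frac{1}{2\pi}\ln(1/|x-x_*|)-\|w\|_{L^\infty(\O)}$ on $B(x_*,\epsilon_0)$. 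The lower bound grows without bound as $x\to x_*$, but this alone does not finish the proof. $G_{*,h}$ cannot be compared with $G_*$ uniformly up to the pole: $G_{*,h}$ is bounded by roughly $C|\ln h|$ while $G_*$ is infinite at $x_*$. We therefore work with a regularized Green's function instead.

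\textbf{Step 1 (regularization).} Let $T$ be an element of $\cT_h$ containing $x_*$, and let $\delta_h\in P_1(T)$ (extended by $0$) satisfy $\int_T \delta_h v\,dx=v(x_*)$ for all $v\in P_1(T)$. Quasi-uniformity gives $\|\delta_h\|_{L^\infty}\leq Ch^{-2}$. Define $g_h\in H^1_0(\O)$ by $\int_\O\nabla g_h\cdot\nabla z\,dx=\int_\O\delta_h z\,dx$. By construction $G_{*,h}=R_h g_h$ exactly.

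\textbf{Step 2 (lower bound for $g_h$).} Write $g_h(x)=\int_T G_*(x,z)\delta_h(z)\,dz$. For $|x-x_*|\geq Kh$ with $K$ fixed and large, expand $G_*(x,\cdot)$ to first order around $x_*$ on $T$. The moment conditions on $\delta_h$ kill the constant and linear parts, so
\begin{equation*}
|g_h(x)-G_*(x,x_*)|\leq Ch^2\sup_{T}|D^2_zG_*(x,\cdot)|\,\|\delta_h\|_{L^1}\leq C/K^2,
\end{equation*}
since $\|\delta_h\|_{L^1}\leq C$. For $|x-x_*|<Kh$, use the direct bound $g_h(x)\geq \frac{1}{2\pi}\ln(1/(Kh))-C$ instead. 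This follows by averaging the logarithm against $\delta_h$, whose mass is $1$ and whose size is $O(h^{-2})$. In both cases we get
\begin{equation*}
g_h(x)\geq \tfrac{1}{2\pi}\ln\big(1/\max(|x-x_*|,Kh)\big)-C_0\qquad\text{on } B(x_*,\epsilon_0).
\end{equation*}

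\textbf{Step 3 (error $R_hg_h-g_h$), the main obstacle.} We need $\|g_h-R_hg_h\|_{L^\infty(B(x_*,\epsilon_0))}\leq C$ uniformly in $h$. A merely $o(|\ln h|)$ bound is not enough, because points at distance $\epsilon_*$ from $x_*$ must also be controlled.

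First we would use the standard result that the regularized Green's function satisfies $\|g_h-R_hg_h\|_{L^\infty(\O)}\leq C$ in 2D. The route is the weighted-norm technique of Schatz--Wahlbin and Scott: local $L^\infty$ stability of $R_h$ up to a log factor, combined with $|g_h|_{W^{2,\infty}}\leq Ch^{-2}$ on $B(x_*,Kh)$ and $|D^2g_h(x)|\leq C|x-x_*|^{-2}$ away from it. Dyadic annuli then sum the local interpolation errors to $O(1)$.

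If the global bound is awkward, there is a fallback. Away from the pole, on the annulus $\epsilon/2\leq|x-x_*|\leq 2\epsilon$, apply the interior maximum norm estimate as in \eqref{eq:InteriorMaxNorm}. This gives $|g_h-R_hg_h|\leq C(\epsilon)(h^2|\ln h|+\|g_h-R_hg_h\|_\LT)$. The $L^2$ error is $O(h|\ln h|^{1/2})$ by duality, because $\|g_h-R_hg_h\|_\LT\leq Ch\|\nabla(g_h-R_hg_h)\|_\LT$. So on $\partial B(x_*,\epsilon)$ we have $R_hg_h\geq \frac{1}{2\pi}\ln(1/\epsilon)-C_0-1$ for small $h$. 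Inside the ball, $G_{*,h}$ is discrete harmonic away from the element $T$. A discrete maximum principle would then propagate the boundary lower bound inward, but it requires a weakly acute mesh, so the uniform bound from the weighted technique is preferable.

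\textbf{Step 4 (conclusion).} Combining Steps 2 and 3 gives
\begin{equation*}
G_{*,h}(x)\geq \tfrac{1}{2\pi}\ln\big(1/\max(\epsilon_*,Kh)\big)-C_1\qquad\text{on } B(x_*,\epsilon_*).
\end{equation*}
Choose $\epsilon_*$ with $\frac{1}{2\pi}\ln(1/\epsilon_*)>M+C_1$, and $h$ small so that $Kh\leq\epsilon_*$. This yields \eqref{eq:DG2}.
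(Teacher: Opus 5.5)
Your Steps 1, 2 and 4 are sound and share the paper's architecture: $G_{*,h}=R_hg_h$ for a regularized Green's function, a logarithmic lower bound for $g_h$, then an $h$-uniform bound on $g_h-R_hg_h$. Your Step 2 uses the domain Green's function, the moment conditions of $\delta_h$ and the threshold $Kh$. That is a legitimate alternative to the paper's cut-off Newton potential $\rho N_*$ with $\delta_{T_*}$ supported in an inner subtriangle.

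The gap is Step 3. You rightly call it the crux, but you never establish it. You invoke a ``standard result'' and sketch a weighted-norm/dyadic-annuli argument, and as written the sketch does not close. Suppose the local $L^\infty$ stability of $R_h$ carries a factor $|\ln h|$, as in the form \eqref{eq:InteriorMaxNorm}. On the ball $B(x_*,Kh)$ the interpolation error of $g_h$ is only $O(1)$, so that ball contributes $O(|\ln h|)$ with an uncontrolled constant, not $O(1)$. Getting $O(1)$ by this route would need scale-dependent $\ln(d/h)$ factors or log-free weighted estimates, which you do not supply. Your fallback needs a discrete maximum principle, which is unavailable on a general quasi-uniform mesh, as you acknowledge.

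The missing idea is elementary and specific to two dimensions. For $v\in H^2(\O)\cap H^1_0(\O)$ one has $\|v-R_hv\|_{L^\infty(\O)}\le Ch|v|_{H^2(\O)}$; this is \eqref{eq:SubOptimal}. It follows by combining three facts:
\begin{enumerate}
\item the nodal interpolation bound $\|v-\Pi_hv\|_{L^\infty(\O)}\le Ch|v|_{H^2(\O)}$;
\item the inverse estimate $\|w_h\|_{L^\infty(\O)}\le Ch^{-1}\|w_h\|_\LT$ for $w_h\in V_h$;
\item the bound $\|v-R_hv\|_\LT\le Ch^2|v|_{H^2(\O)}$ on convex $\O$.
\end{enumerate}
Elliptic regularity gives $|g_h|_{H^2(\O)}\le C\|\delta_h\|_\LT\le Ch^{-1}$, because $\|\delta_h\|_{L^\infty}\le Ch^{-2}$ on a set of area $O(h^2)$. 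You already had that bound but did not use it. The powers of $h$ cancel, so $\|g_h-R_hg_h\|_{L^\infty(\O)}\le C$ globally, with no weighted norms and no maximum principle.

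This is how the paper argues in \eqref{eq:DG9} and \eqref{eq:DG11}, after splitting $G_*=\rho N_*+(G_*-\rho N_*)$. It is also why the argument is confined to $d=2$ (Remark~\ref{rem:Dimensions}). With this replacing your Step 3, your Step 4 finishes the proof.
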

\begin{proof}
Let $x_*$ belong to a triangle $T_*\in\cT_h$.  There exists a function $\delta_{T_*}\in\LT$
with the following properties.
\begin{equation}\label{eq:Support}
\text{$\delta_{T_*}$ vanishes outside $\tilde T_*$},
\end{equation}
 where $\tilde T_*\subset T_*$ is the triangle bounded by the
 lines $\lambda_1=1/6$, $\lambda_2=1/6$  and
 $\lambda_3=1/6$, and $\lambda_1,\lambda_2,\lambda_3$ are
  the barycentric coordinates for $T_*$.
 Moreover, we have
\begin{align}
  \int_{T_*}\delta_{T_*}v\,dx&=v(x_*) \qquad\forall\,v\in P_1(T_*),\label{eq:DG3}\\
  \|\delta_{T_*}\|_{L^1(T_*)}&\leq C_1,\label{eq:DG4}\\
  \|\delta_{T_*}\|_{L^2(T_*)}&\leq C_2h^{-1},\label{eq:DG5}\\
  \|\delta_{T_*}\|_{L^\infty(T_*)}&\leq C_3 h^{-2},\label{eq:DGnew1}
\end{align}
 where the positive constants $C_1$, $C_2$ and $C_3$ only depend on the
 shape regularity of $T_*$.
 The construction of $\delta_{T_*}$ can be found for example in
 \cite[Theorem~1]{Scott:1973:Singular} or \cite[Section~A.5]{SW:1995:InteriorMax}.
\par
 Let $G_*\in H^1_0(\O)$ be defined by
\begin{equation}\label{eq:DG6}
  \int_\O \nabla G_*\cdot\nabla v\,dx=
  \int_\O \delta_{T_*}v\,dx\qquad\forall\,v\in H^1_0(\O).
\end{equation}
 Then $G_*\in H^2(\O)$ by elliptic regularity,
\begin{equation}\label{eq:LaplaceGh}
 -\Delta G_*=\delta_{T_*},
 \end{equation}
and, in view of
 \eqref{eq:DG1}, \eqref{eq:DG3} and \eqref{eq:DG6}, $G_{*,h}$ is
  the Ritz projection of $G_*$.
\par
 Let the function $N_*$ be defined by
\begin{equation}\label{eq:DG7}
  N_*(x)=-\int_{\tilde T_*}\frac{\ln|x-y|}{2\pi}\delta_{T_*}(y)dy.
\end{equation}
 Then we have $N_*\in H^2(\O)$ and
\begin{equation}\label{eq:NewtonPotential}
  \Delta N_*=-\delta_{T_*}
\end{equation}
(cf. \cite[Theorem~9.9]{GT:2001:EllipticPDE}).
\par
 Let $\rho$ be a function in $C^\infty_c(\O)$ such that
\begin{equation}\label{eq:rho}
 \text{$\rho=1$ on $B(x_*,\gamma)$ for some $\gamma>0$.}
\end{equation}
 The function $\rho N_*$ belongs to $H^2(\O)\cap H^1_0(\O)$,
 and by \eqref{eq:DG4}
 and
 \eqref{eq:LaplaceGh}--\eqref{eq:rho},
\begin{equation}\label{eq:DG8}
  \| \Delta(\rho N_*-G_*)\|_{L^\infty(\O)}=\|\Delta(\rho N_*)+\delta_{T_*}\|_{L^\infty(\O)}
  =\|\Delta(\rho N_*)\|_{L^\infty(\O\setminus B(x_*,\gamma))}\leq C_4,
\end{equation}
 provided that $h$ is sufficiently small so that $T_*$ is a subset of $B(x_*,\gamma/2)$.
 Here the positive constant $C_4$ depends only on $\gamma$ and the
  constant $C_1$ in \eqref{eq:DG4}.
\par
 It follows from \eqref{eq:ERBdd}, \eqref{eq:Sobolev}, \eqref{eq:SubOptimal}
  and \eqref{eq:DG8} that
\begin{align}\label{eq:DG9}
  \|R_h(\rho N_*)-G_{*,h}\|_{L^\infty(\O)}&
  =\|R_h(\rho N_*-G_{*})\|_{L^\infty(\O)}\notag\\
  &\leq \|R_h(\rho N_*-G_{*})-(\rho N_*-G_{*})\|_{L^\infty(\O)}
  +\|\rho N_*-G_{*}\|_{L^\infty(\O)}\\
  &\leq C\|\rho N_*-G_{*}\|_{H^2(\O)}\leq C_5,\notag
\end{align}
 where the positive constant $C_5$ only depends on $C_4$ and
  the constants in \eqref{eq:ERBdd} and \eqref{eq:SubOptimal}.
\par
 Furthermore we find, from \eqref{eq:DG5}, \eqref{eq:LaplaceGh} and \eqref{eq:DG8},
\begin{equation*}
 \|\Delta(\rho N_*)\|_\LT\leq \|\Delta(\rho N_*-G_*)\|_\LT+\|\Delta G_*\|_\LT\leq C_6 h^{-1},
\end{equation*}
 where the positive constant $C_6$ depends only on $C_2$ and $C_4$.
 Therefore we have
\begin{equation}\label{eq:DG11}
  \|\rho N_*-R_h(\rho N_*)\|_{L^\infty(\O)}\leq C_7,
\end{equation}
 where the positive constant $C_7$ depends only on $C_6$ and the constants in
 \eqref{eq:ERBdd} and \eqref{eq:SubOptimal}.
\par
 We see from \eqref{eq:DG9} and \eqref{eq:DG11} that it only remains to show
 $\rho N_*$ is large near $x_*$.
\par
 For $x\in T_*$, we can use \eqref{eq:DG3} and a change of variables to write
\begin{align}\label{eq:DGnew2}
  \rho(x)N(x)&=-\int_{T_*}\frac{\ln |x-y|}{2\pi}\delta_{T_*}(y)dy\notag\\
  &=-\frac{\ln h}{2\pi}-\int_{T_*}\frac{\ln |(1/h)(x-y)|}{2\pi}\delta_{T_*}(y)dy\\
   &=-\frac{\ln h}{2\pi}-\int_{\hat T_*}\frac{\ln|\hat x-\hat y|}{2\pi}\delta_{T_*}(h\hat y) h^2d\hat y,\notag
\end{align}
 where $\hat x=(1/h)x$ and $\hat T_*=(1/h)T_*$.  In view of \eqref{eq:DGnew1}, we have
\begin{equation}\label{eq:DGnew3}
  \Big|\int_{\hat T_*}\frac{\ln|\hat x-\hat y|}{2\pi}\delta_{T_*}(h\hat y) h^2d\hat y,\Big|
  \leq C_8,
\end{equation}
 where the positive constant only depends on $C_3$.
\par
 On the other hand,
 for $x\in B(x_*,\gamma)\setminus T_*$, we have, by \eqref{eq:DG3}, \eqref{eq:DG7} and \eqref{eq:rho},
\begin{equation}\label{eq:DG12}
  \rho(x) N_*(x)=-\int_{\tilde T_*}\frac{\ln |x-y|}{2\pi}\delta_{T_*}(y)dy
  =-\frac{\ln |x-c_*|}{2\pi}+
  \frac{1}{2\pi}\int_{\tilde T_*}\delta_{T_*}(y)\ln\frac{|x-c_*|}{|x-y|}dy,
\end{equation}
 where $c_*$ is the center of $T_*$.
\par
 Note that \eqref{eq:Support} and the quasi-uniformity of $\cT_h$
  imply $|x-c_*|$ and $|x-y|$ are
 comparable for $y\in\tilde T_*$.  It then follows from \eqref{eq:DG4} that
\begin{equation}\label{eq:DG13}
  \frac{1}{2\pi}\Big|\int_{\tilde T_*}\delta_{T_*}(y)\ln\frac{|x-c_*|}{|x-y|}
  dy\Big|\leq C_9,
\end{equation}
 where the positive constant $C_9$ is independent of $h$.
\par
 Putting \eqref{eq:DG9}--\eqref{eq:DG13} together, we see that,
 for $x\in B(x_*,\gamma)$,
\begin{align*}
  G_{*,h}(x)&\geq [R_h(\rho N_*)](x)-C_5\\
   &\geq \rho(x)N_*(x)-C_7-C_5\\
   &\geq
   \begin{cases}
    \d -\frac{\ln h}{2\pi}-C_8-C_7-C_5&\qquad\text{if $x\in T_*$},\\[10pt]
     \d -\frac{\ln|x-c_*|}{2\pi}-C_9-C_7-C_5&\qquad\text{if $x\in B(x_*,\gamma)\setminus T_*$}.
   \end{cases}
\end{align*}
\par
 Let $\epsilon_*<\gamma/2$ be a small positive number such that
\begin{equation*}
 -\frac{\ln|x-c_*|}{2\pi}-C_9-C_7-C_5> -\frac{\ln (2\epsilon_*)}{2\pi}-C_9-C_7-C_5>M\qquad
 \text{if $|x-c_*|<2\epsilon_*$}.
\end{equation*}
 Then  \eqref{eq:DG2} is valid if $h$ is sufficiently small so that $|x-x_*|<\epsilon_*$ implies
 $|x-c_*|<2\epsilon_*$ and also
\begin{equation*}
   -\frac{\ln h}{2\pi}-C_8-C_7-C_5>M.
\end{equation*}
\end{proof}
\begin{remark}\label{rem:Dimitri}
  Another proof of Lemma~\ref{lem:DG} based on the results in
  \cite{LP:2017:Positivity} can be found in \cite{BMV:2019:Point}.
\end{remark}
\begin{remark}\label{rem:Dimensions}
  The arguments for Lemma~\ref{lem:DG} do not work in three
  dimensions because \eqref{eq:DG5}
  becomes
    $\|\delta_{T_*}\|_\LT\leq C_2h^{-\frac32}$
  and then \eqref{eq:DG11} becomes
   $\|\rho N_*-R_h(\rho N_*)\|_{L^\infty(\O)}\leq Ch^{-\frac12}$.
\end{remark}

\end{document}